\newtheorem{thm}{Theorem}[section]
\newtheorem{thmbis}{Theorem}
\newtheorem*{thm*}{Theorem}
\newtheorem{dfn}[thm]{Definition} 
\newtheorem*{dfn*}{Definition}
\newtheorem*{cor*}{Corollary}
\newtheorem{corbis}{Corollary}
\newtheorem{prop}[thm]{Proposition} 
\newtheorem*{prop*}{Proposition} 
\newtheorem*{properties*}{Properties} 
\newtheorem{lem}[thm]{Lemma} 
\newtheorem*{lem*}{Lemma}
\newtheorem*{claim*}{Claim} 
\newtheorem*{fact*}{Fact}
\newtheorem*{qst*}{Question}
\newtheorem*{pb*}{Problem}
\theoremstyle{remark}
\newtheorem*{algo*}{Algorithm} 
\newtheorem*{rem*}{Remark}
\newtheorem*{example*}{Example}
\newcounter{numEnonceTmpInterne}
\newenvironment{enonce*}[1]{\theoremstyle{plain}\stepcounter{numEnonceTmpInterne}%
\def\a{enoncetmp\alph{numEnonceTmpInterne}}%
\newtheorem*{\a}{#1}\begin{\a}}{\end{\a}}
\edef\@tempa#1#2{\def#1{\mathaccent\string"\noexpand\accentclass@#2 }}
\@tempa\rond{017}
\renewcommand{\phi}{\varphi} 
\newcommand{\m} {^{-1}}
\newcommand {\ra} {\rightarrow}
\newcommand {\into} {\hookrightarrow}
\newcommand{\ie} {i.e.\ }
\newcommand {\calh} {{\mathcal {H}}}
\newcommand {\bbF} {{\mathbb {F}}}
\newcommand {\bbZ} {{\mathbb {Z}}}   
\newcommand{\grp}[1]{\langle #1 \rangle}
\newcommand{\Hom} {{\mathrm{Hom}}}
\newcommand{\gobble}[1]{} 
\newcommand {\F} {{\mathbb {F}}}  
\newcommand {\Z} {{\mathbb {Z}}}
\newcommand{\inc}{\subset}
\begin{document}

\title{Computing equations for residually free groups}
\author{Vincent Guirardel, Gilbert Levitt}
\date{
}

\maketitle

 \begin{abstract}
We show that there is no algorithm deciding whether the maximal residually free
quotient of a given finitely presented group is finitely presentable or
not.
 
 Given a finitely generated subgroup $G$  of a finite product of limit
 groups, we  discuss the possibility of finding an explicit set of
 defining equations (i.e. of expressing $G$ as the maximal residually free
 quotient of an explicit finitely presented group).
 \end{abstract}

\section{Introduction}

Any countable group $G$ has a largest residually free quotient $RF(G)$, equal to $G/\bigcap_{f\in \calh}\ker f$ where   $\calh$ 
is the set of   all homomorphisms from $G$ to a non-abelian free group $\F$.

In the language of   \cite{BMR_algebraicI}, if $R$ is a finite set of group equations on a finite set of variables $S$, then
$G=RF(\grp{S\mid R})$ is the \emph{coordinate group} of the variety defined by the system of equations $R$.  We say that $R$ is a \emph{set of defining equations} of $G$ over $S$.
Equational noetherianness of free groups implies that any finitely generated residually free group $G$
has a  (finite) set of  defining equations  \cite{BMR_algebraicI}.

On the other hand, any finitely generated residually free group embeds into a finite product
of limit groups (also known as finitely generated fully residually free groups),
which correspond to the \emph{irreducible components} of the variety defined by $R$
 \cite{BMR_algebraicI, KhMy_irreducible2, Sela_diophantine1}. Conversely, any subgroup of a finite product of limit groups is residually free.

This gives three possibilites to  define a finitely generated residually free group $G$ in an explicit way:
\begin{enumerate}
\item give a finite presentation of $G$ (if $G$ is finitely presented);
\item give a set of defining equations of $G$: write $G=RF(\grp{S\mid R})$, with $S$ and $R$ finite;
\item write $G$ as the subgroup of $L_1\times\dots\times L_n$ generated by a finite subset $S$,
where $L_1,\dots,L_n$ are limit groups given by some finite presentations.
\end{enumerate}

We investigate the algorithmic possibility to go back and forth between these ways of defining $G$.

One can go from 2 to 3:
given a set of defining equations of $G$, one can find 
  an explicit embedding into some product of limit groups 
\cite{KhMy_irreducible2,KhMy_effective, BHMS_fprf,GrWi_enumerating}.

Conversely, if $G$ is given as a subgroup of a product of limit groups, \emph{and if one knows that $G$ is finitely presented,} one can 
compute a presentation of $G$ \cite{BHMS_fprf}.  Obviously, a finite presentation is  a set of defining equations.

Since residually free groups are not always finitely presented, we investigate the following question:

\begin{qst*}
Let $L=L_1\times\dots\times L_n$ be a product of limit groups.
Let $G$ be the subgroup generated by a finite   subset    $S\inc L $.
Can one algorithmically find a finite set of defining equations for $G$, \ie find a finite presentation $\langle S \mid  R\rangle$ such that $G=RF(\langle S \mid  R\rangle)$?
\end{qst*}

We will prove that   this question has a negative answer.
On the other hand, we introduce  a closely related notion which  has  better algorithmic properties.

Let $RF_{na}(G)$ be the quotient $G/\bigcap_{f\in \calh_{na}}\ker f$ where   $\calh_{na}$ is the set of 
all homomorphisms from $G$ to $\F$ with    \emph{non-abelian image}.
Of course, $RF_{na}(G)$ is a quotient of $RF(G)$, which forgets the information about morphisms to $\bbZ$.  In fact (Lemma \ref{lem_centre}), it  is the quotient of $ RF(G)$ by its center.

We say that $G$ is a \emph{residually non-abelian free} group if
$G=RF_{na}(G)$, \ie if every non-trivial element of $G$ survives in a non-abelian free quotient of $G$;
equivalently, $G$ is   residually non-abelian free  
if and only if $G$ is residually free and has trivial center.
Given a residually non-abelian free group $G$, we say that $R$ 
is a \emph{set of $na$-equations} of $G$ over $S$ if $G=RF_{na}(\grp{S\mid R})$.

We write $Z(G)$ for the center of $G$, and $b_1(G)$ for the torsion-free rank of $H_1(G,\Z)$. 

\begin{thmbis}\label{thm1}
  \begin{itemize}
  \item There is an algorithm which takes as input presentations of limit groups $L_1,\dots,L_n$, and a finite subset  $S 
    \subset  L_1\times\dots\times L_n$, and which computes a finite set of $na$-equations for $G/Z(G)=RF_{na}(G)$, where $G=\langle S\rangle$.

  \item 
    One can compute a finite set of defining equations for  $G=\langle S\rangle$ if and only if one can compute $b_1(G)$.

  \end{itemize}
\end{thmbis}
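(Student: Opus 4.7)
For the easy direction $(\Rightarrow)$, if $R$ is a finite set of defining equations, every morphism $\langle S\mid R\rangle\to\Z$ factors through $RF(\langle S\mid R\rangle)=G$ because $\Z$ is residually free, giving $b_1(\langle S\mid R\rangle)=b_1(G)$; this is computable from the finite presentation via Smith normal form on the matrix of abelianized relations.

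For the hard direction $(\Leftarrow)$, the plan rests on the following criterion: \emph{a finite set $R\subseteq\ker(F_S\to G)$ is a set of defining equations for $G$ if and only if $RF_{na}(\langle S\mid R\rangle)=RF_{na}(G)$ and $b_1(\langle S\mid R\rangle)=b_1(G)$.} The forward implication uses that both invariants are preserved when passing to $RF$. For the converse, the induced surjection $RF(\langle S\mid R\rangle)\twoheadrightarrow G$ must be shown injective, which amounts to checking that every morphism $\phi:\langle S\mid R\rangle\to\F$ factors through $G$. Any $\phi$ with non-abelian image factors through $RF_{na}(\langle S\mid R\rangle)=RF_{na}(G)$, and this factorization is the natural composition $\langle S\mid R\rangle\to G\to RF_{na}(G)$, so $\phi$ factors through $G$; any $\phi$ with abelian image factors through the abelianization, and the $b_1$-equality forces the kernel of $\langle S\mid R\rangle^{ab}\twoheadrightarrow G^{ab}$ to be torsion, hence killed by $\phi$.

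Given the criterion, the algorithm proceeds as follows. (i) Apply Part 1 to compute $na$-equations $R'$ for $RF_{na}(G)$; the image of each $r'\in R'$ in $G$ lies in $Z(G)\subseteq G\subseteq L_1\times\dots\times L_n$. (ii) Using decidability of the word problem in products of limit groups together with $G=\langle S\rangle$, enumerate words in $F_S$ until finding $c_{r'}\in F_S$ with the same image in $G$ as $r'$, and replace $r'$ by $r'c_{r'}^{-1}$ to obtain $R''\subseteq\ker(F_S\to G)$, still a set of $na$-equations for $RF_{na}(G)$. (iii) Adjoining any $v\in\ker(F_S\to G)$ preserves the equality $RF_{na}(\langle S\mid\cdot\rangle)=RF_{na}(G)$, since every $na$-morphism of the current presentation factors through $RF_{na}(G)=Q$, which kills $v$. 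Enumerate $v\in\ker(F_S\to G)$ and adjoin each that strictly decreases the computable quantity $b_1(\langle S\mid R\rangle)$, halting when the value $b_1(G)$ is reached; by the criterion, the resulting $R$ is a set of defining equations.

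The main technical step is establishing the criterion itself, in particular identifying the factorization of $na$-morphisms. Termination of step (iii) follows because $\langle S\mid R''\rangle\twoheadrightarrow G$ already gives $b_1(\langle S\mid R''\rangle)\geq b_1(G)$, and whenever the inequality is strict the kernel of the induced map on abelianizations has positive torsion-free rank; this guarantees the existence, and eventual discovery by enumeration, of an element $v\in\ker(F_S\to G)$ with non-torsion image in the current $\langle S\mid R\rangle^{ab}$, each such adjunction decreasing $b_1$ by exactly one.
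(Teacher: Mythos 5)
Your treatment of the second bullet follows the paper's route: your ``criterion'' is exactly the paper's Lemma~\ref{lem_b1}, the easy direction is the same computation, and your step (iii) --- adjoin enumerated elements of $\ker(F(S)\to G)$ one at a time until the computable $b_1$ of the presentation drops to the known value $b_1(G)$ --- is precisely the paper's procedure (your explicit termination argument is a welcome addition that the paper leaves implicit). But there are two genuine gaps. The larger one is that you never prove the first bullet: your step (i) invokes it as a black box, whereas it is the main technical content of the theorem. The paper proves it by computing finite presentations $\grp{S\mid R_i}$ of the non-abelian projections $p_i(G)$ (via \cite{BHMS_fprf,Wilton_hall}), discarding the abelian projections (Lemma~\ref{lem_nonab}), and showing by a CSA argument (Proposition~\ref{prop_eqna}) that the set of iterated commutators $\Tilde R=[R_n^{S_0},[\dots,[R_2^{S_0},R_1]\dots]]$ is a set of $na$-equations. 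None of this appears in your proposal.

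The second gap is step (ii). You rightly observe that a black-box set of $na$-equations $R'$ need only map into $Z(G)$ rather than to $1$, so a correction is needed before $G$ can be a quotient of $\grp{S\mid R''}$; but your correction fails. The search condition ``$c_{r'}$ has the same image in $G$ as $r'$'' is satisfied by $c_{r'}=r'$ itself, yielding the empty relator; and for any admissible $c_{r'}$ the set of non-abelian-image morphisms killing $r'c_{r'}^{-1}$ is in general strictly larger than the set killing $r'$, so $R''$ need no longer be a set of $na$-equations. Concretely, take $G=F(a,b)\times\langle x\rangle\subset \F_2\times \F_2$ generated by $s_1=(a,1)$, $s_2=(b,1)$, $s_3=(1,x)$: the single relator $r'=s_3$ is a valid $na$-equation, while $s_3c^{-1}$ with $c=s_1^{-1}s_3s_1$ is killed by morphisms sending $s_3$ to a nontrivial element commuting with the image of $s_1$, which do not factor through $RF_{na}(G)=F(a,b)$. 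A correct repair in your black-box spirit is $R''=\{[r',s]\st r'\in R',\, s\in S\}$: these words die in $G$ because the image of each $r'$ is central, and a morphism with non-abelian image kills all of them if and only if it kills each $r'$, since a non-abelian subgroup of $\F$ has trivial center. The paper sidesteps the issue entirely because its $\Tilde R$ consists of commutators that are visibly trivial in $G$; the commutator form is also what lets it prove that $G$ is a quotient of $\grp{S\mid\Tilde R}$ (abelian-image morphisms kill commutators, and non-abelian-image ones factor through $RF_{na}$), which is the analogue of what your step (ii) was meant to secure.
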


Since there is no algorithm computing $b_1(\langle S\rangle)$ from $S\inc \F_2\times \F_2$ \cite{BrMi_structure}, we deduce:

\begin{corbis}\label{cor2}
There is no algorithm which takes as an input a finite subset  $S\inc \F_2\times \F_2$ and computes a finite set of equations for $\langle S\rangle$.  \qed
\end{corbis}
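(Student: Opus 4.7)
The plan is to derive the corollary by contraposition directly from the second bullet of Theorem~\ref{thm1} together with the cited theorem of Bridson--Miller. There is essentially no real work beyond arranging the logical implication in the right direction, so my proof proposal is short.

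First I would observe that $\F_2$ is itself a limit group (every finitely generated free group is fully residually free), and hence $\F_2\times\F_2$ is a product of limit groups of the form $L_1\times L_2$ permitted as input to Theorem~\ref{thm1}. A finite subset $S\inc \F_2\times\F_2$ is therefore exactly the kind of data to which the second bullet of Theorem~\ref{thm1} applies, with $G=\langle S\rangle$.

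Next I would argue by contradiction. Suppose such an algorithm $\mathcal{A}$ existed: given any $S\inc \F_2\times\F_2$, it outputs a finite presentation $\langle S\mid R\rangle$ with $G=RF(\langle S\mid R\rangle)$. Then the second bullet of Theorem~\ref{thm1} (the ``only if'' direction) gives an algorithm $\mathcal{B}$ which, using $\mathcal{A}$ as a subroutine, computes $b_1(G)$. Composing $\mathcal{A}$ with $\mathcal{B}$ would then produce an algorithm taking $S\inc \F_2\times\F_2$ as input and computing $b_1(\langle S\rangle)$.

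Finally, I would invoke the Bridson--Miller result cited just before the corollary: no such algorithm for $b_1(\langle S\rangle)$ exists for $S\inc \F_2\times\F_2$. This contradiction proves the corollary. The only conceivable obstacle is to make sure Theorem~\ref{thm1} applies to the specific product $\F_2\times \F_2$, but since free groups are trivially limit groups and their presentations are explicit, this step is immediate.
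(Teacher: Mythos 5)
Your proposal is correct and follows exactly the paper's route: the corollary is deduced from the second bullet of Theorem~\ref{thm1} combined with the Bridson--Miller non-computability of $b_1(\langle S\rangle)$ for $S\inc \F_2\times\F_2$, the only (trivial) extra check being that $\F_2$ is a limit group. The paper states this in one line before the corollary; your contrapositive arrangement is the same argument spelled out.
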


We also investigate the possiblity to decide whether a residually free quotient is finitely presented. Using  Theorem \ref{thm1} and \cite{Grunewald_fp}, we prove:

\begin{thmbis}\label{thm2}
There is no algorithm with takes as an input a finite group presentation $\grp{S\mid R}$, 
and which decides whether $RF(\grp{S\mid R})$ is finitely presented.
\end{thmbis}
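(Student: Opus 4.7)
The plan is to reduce to Grunewald's undecidability result \cite{Grunewald_fp}, which provides a recursive family of finitely generated subgroups $G_n$ of a fixed product of limit groups (for instance $\F_2\times \F_2$), each given by a finite generating set $T_n$, such that finite presentability of $G_n$ is undecidable.

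Suppose for contradiction that an algorithm $\cala$ decides, on input a finite presentation $\langle S\mid R\rangle$, whether $RF(\langle S\mid R\rangle)$ is finitely presented. Applying the first bullet of Theorem~\ref{thm1} to $T_n$, I would compute a finite presentation $P_n=\langle T_n\mid R_n\rangle$ satisfying $RF_{na}(P_n)=G_n/Z(G_n)$; by Lemma~\ref{lem_centre}, this reads
\[
RF(P_n)/Z(RF(P_n)) \;=\; G_n/Z(G_n).
\]
Running $\cala$ on $P_n$ would then determine whether $G_n$ is finitely presented --- assuming the key equivalence ``$RF(P_n)$ is f.p.\ if and only if $G_n$ is f.p.'' --- contradicting Grunewald.

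The main obstacle is this equivalence, which I would establish in two steps. First, the center of any finitely generated residually free group $H$ is finitely generated abelian: embedding $H$ into a product $L_1\times\cdots\times L_k$ of limit groups with projections $\pi_i$, one checks that $Z(H)$ sits inside $\prod_i C_{L_i}(\pi_i(H))$; by the CSA property of limit groups, the factors with $\pi_i(H)$ non-abelian are trivial, while the remaining factors are maximal abelian subgroups of limit groups, which are finitely generated free abelian. Second, for any finitely generated group $H$ with finitely generated central subgroup $Z$, $H$ is finitely presented iff $H/Z$ is --- a standard fact about central extensions by finitely generated abelian groups. Applying both to $G_n$ (with $Z=Z(G_n)$) and to $RF(P_n)$ (with $Z=Z(RF(P_n))$), and using the displayed identity, yields the equivalence and hence the contradiction.
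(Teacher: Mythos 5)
Your proof is correct and follows essentially the same route as the paper: apply the first bullet of Theorem~\ref{thm1} to produce a finite presentation with the same $RF_{na}$, transfer finite presentability back and forth across the central quotient $RF \to RF_{na}$, and invoke the undecidability of finite presentability for finitely generated subgroups of $\F_2\times\F_2$. The only (immaterial) difference is your argument that the center of a finitely generated residually free group is finitely generated --- via an embedding into a product of limit groups and the CSA property --- where the paper's Lemma~\ref{lem_fp} instead observes that the abelianization map is injective on the center.
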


\section{The residually non-abelian free quotient $RF_{na}$}

We always  denote by $G$  a finitely generated group, and by $\F$     a non-abelian free group.

\begin{dfn}

$RF(G)$ is  the quotient of $G$ by 
the intersection of the kernels of all morphisms   $G\to \F$. 

$RF_{na}(G)$ is the quotient of $G$ by 
the intersection of the kernels of all morphisms   $G\to \F$ with non-abelian image.
\end{dfn}

One may view $RF(G)$ as the image of $G$ in $\bbF^\calh$, where $\calh$ is the set of   all morphisms  $G\to \F$, and $RF_{na}(G)$ as the image in $\bbF^{\calh_{na}}$, where $\calh_{na}$ is the set of all   morphisms with non-abelian image.

 Every  homomorphism $G\to \F$ factors through $RF(G)$ (through $RF_{na}(G)$ if its image is not abelian). By definition,    $G$ is residually free if and only if $G=RF(G)$,  residually non-abelian free if and only if $G=RF_{na}(G)$.

\begin{lem}\label{lem_centre}
There is an exact sequence
$$1\ra Z(RF(G))\ra RF(G)\ra RF_{na}(G)\ra 1 .$$
\end{lem}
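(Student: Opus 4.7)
Since $\calh_{na}\subseteq\calh$, there is a canonical surjection $\pi\colon RF(G)\onto RF_{na}(G)$, and the whole content of the lemma is to identify $\ker\pi$ with $Z(RF(G))$. The plan is to set $H=RF(G)$ and observe that every morphism $G\to\F$ factors uniquely through $H$, the factorization preserving whether the image is abelian. Thus $\ker\pi$ is identified with the intersection $N$ of the kernels of all morphisms $H\to\F$ with non-abelian image, and the problem reduces to the algebraic claim that for any residually free group $H$,
$$N:=\bigcap_{\substack{f\colon H\to\F \\ f(H)\text{ non-abelian}}}\ker f \;=\; Z(H).$$

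For the inclusion $Z(H)\subseteq N$, take $z\in Z(H)$ and a morphism $f\colon H\to\F$ with non-abelian image. Then $f(z)$ lies in the centre of $f(H)$. But $f(H)$, being a non-abelian subgroup of $\F$, is itself a non-abelian free group, hence has trivial centre; so $f(z)=1$.

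For the reverse inclusion $N\subseteq Z(H)$, pick $w\in N$ and any $g\in H$, and show $[w,g]=1$ in $H$. Since $H$ is residually free, it suffices to check that $f([w,g])=1$ for every $f\colon H\to\F$. If $f(H)$ is abelian this holds tautologically; if $f(H)$ is non-abelian then $f(w)=1$ by choice of $w$, and again $f([w,g])=1$.

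I do not anticipate any real obstacle. The only care needed is in the first step, to verify that the set of morphisms $G\to\F$ with non-abelian image is in bijection with the corresponding set for $H=RF(G)$; this is immediate from the universal property of $RF(G)$. The substantive input is the elementary fact that every non-abelian subgroup of a free group is non-abelian free, hence centerless.
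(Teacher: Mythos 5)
Your proof is correct and follows essentially the same route as the paper's: both reduce to showing that the intersection of the kernels of the non-abelian-image morphisms out of $H=RF(G)$ equals $Z(H)$, and both use residual freeness of $H$ applied to a commutator $[w,g]$ for the harder inclusion. The only cosmetic difference is that where you invoke Nielsen--Schreier plus centerlessness of non-abelian free groups to kill central elements, the paper uses commutative transitivity of $\F$; the two facts do the same job here.
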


In particular, $G$ is   residually non-abelian free   if and only if $G$ is residually free and $Z(G)=1$.
If $G$ is a non-abelian limit group, it has trivial center and  $RF_{na}(G)=RF(G)=G$.

\begin{proof}
Let $H=RF(G)$.
Consider $a\in Z(H)$ and $f:H\ra\bbF$ with $f(a)\neq 1$.
The image of $f$ centralizes $f(a)$, so is abelian by commutative transitivity of $\bbF$. 
Thus $a$ has trivial image in $RF_{na}(H)=RF_{na}(G)$.

Conversely, consider $a\in H\setminus Z(H)$, and $b\in H$ with $[a,b]\neq 1$.
There exists $f:H\ra \bbF$ such that $f([a,b])\neq 1$.
Then $f(H)$ is non-abelian, and $f(a)\neq 1$. This means that the image of $a$ in $RF_{na}(G)$ is non-trivial.
\end{proof}

Any epimorphism $f:G\ra H$ induces epimorphisms $f_{RF}:RF(G)\ra RF(H)$
 and
  $f_{na}:RF_{na}(G)\ra RF_{na}(H)$. 

\begin{lem}\label{lem_b1}
Let $f:G\ra H$ be an epimorphism.
Then $f_{RF}:RF(G)\ra RF(H)$ is an isomorphism if and only if $f_{na}:RF_{na}(G)\ra RF_{na}(H)$ is an isomorphism
and $b_1(G)=b_1(H)$.
\end{lem}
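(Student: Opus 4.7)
The plan is to derive a formula expressing $b_1(G)-b_1(RF_{na}(G))$ as the rank of $Z(RF(G))$, and then deduce the lemma from a diagram chase on the commutative diagram coming from Lemma \ref{lem_centre}.

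First I would establish two preparatory facts. (a) For every finitely generated group $G$, one has $b_1(G)=b_1(RF(G))$: given $\phi:G\to\Z$ and any non-trivial $a\in\F$, the map $g\mapsto a^{\phi(g)}$ is a homomorphism $G\to\F$ with the same kernel as $\phi$, so $\phi$ factors through $RF(G)$, giving $\Hom(G,\Z)=\Hom(RF(G),\Z)$. (b) $Z(RF(G))$ is free abelian of finite rank $r(G)$: this uses the embedding of $RF(G)$ into a finite product of limit groups $L_1\times\cdots\times L_n$, since by commutative transitivity each $Z(L_i)$ is either trivial (if $L_i$ is non-abelian) or finitely generated free abelian (if $L_i$ is abelian). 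I also need that the natural map $Z(RF(G))\to RF(G)^{ab}$ is injective: given non-trivial $a\in Z(RF(G))$, pick $f:RF(G)\to\F$ with $f(a)\neq 1$; since $f(RF(G))$ centralises $f(a)\neq 1$, it lies in the cyclic centraliser of $f(a)$, and composing with a generator of that centraliser gives a homomorphism $RF(G)\to\Z$ non-zero on $a$, which factors through $RF(G)^{ab}$. Abelianising the sequence of Lemma \ref{lem_centre} and taking torsion-free ranks then yields
\[
b_1(RF(G))=b_1(RF_{na}(G))+r(G),
\]
and similarly for $H$.

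The forward direction of the lemma is then immediate: an isomorphism $f_{RF}$ restricts to an isomorphism of centres, descends to an isomorphism $f_{na}$, and preserves $b_1$, giving $b_1(G)=b_1(H)$ via (a). For the converse, assume $f_{na}$ is an isomorphism and $b_1(G)=b_1(H)$; then $r(G)=r(H)$ by the rank formula. Consider the commutative diagram with short exact rows
\[
\begin{array}{ccccccccc}
1 & \to & Z(RF(G)) & \to & RF(G) & \to & RF_{na}(G) & \to & 1 \\
  &     & \downarrow z & & \downarrow f_{RF} & & \downarrow f_{na} & & \\
1 & \to & Z(RF(H)) & \to & RF(H) & \to & RF_{na}(H) & \to & 1.
\end{array}
\]
Since $f_{RF}$ is surjective and $f_{na}$ is injective, any preimage in $RF(G)$ of an element of $Z(RF(H))$ has trivial image in $RF_{na}(G)$ and hence lies in $Z(RF(G))$; this shows $z$ is surjective, and the same argument gives $\ker f_{RF}=\ker z$. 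But $z$ is a surjection between free abelian groups of equal finite rank $r(G)=r(H)$, so it is an isomorphism; therefore $\ker f_{RF}=1$ and $f_{RF}$ is an isomorphism.

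The main obstacle is the rank formula: one must show both that $Z(RF(G))$ is finitely generated free abelian of rank $r(G)$ and that it injects into $RF(G)^{ab}$. Once that is in place, the lemma follows from routine diagram chasing together with the elementary fact that a surjection between free abelian groups of equal finite rank is an isomorphism.
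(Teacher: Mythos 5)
Your proof is correct, but it takes a genuinely different route from the paper's. The paper's argument is two lines: $f_{RF}$ (resp.\ $f_{na}$) is an isomorphism if and only if every morphism $G\ra\F$ (resp.\ every such morphism with non-abelian image) factors through $f$; morphisms with abelian image are governed by $\Hom(-,\Z)$, and the induced injection $\Hom(H,\Z)\into\Hom(G,\Z)$ is onto exactly when $b_1(G)=b_1(H)$. So the paper splits the set of morphisms to $\F$ rather than analysing the groups $RF$ and $RF_{na}$ themselves. You instead quantify the difference between $RF(G)$ and $RF_{na}(G)$ through the central extension of Lemma \ref{lem_centre}, proving the rank formula $b_1(RF(G))=b_1(RF_{na}(G))+\mathrm{rk}\,Z(RF(G))$ and finishing with a diagram chase; this is heavier (it needs $Z(RF(G))$ to be finitely generated free abelian and to inject into the abelianization, facts the paper only invokes later, in Lemma \ref{lem_fp}), but it buys the explicit rank formula as a by-product. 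Two small points to tidy: your first justification of fact (b) via an embedding $RF(G)\subset L_1\times\dots\times L_n$ is not quite right as stated, since the centre of a subgroup of a product is not controlled by the centres $Z(L_i)$ of the factors (e.g.\ a cyclic subgroup of $\F_2$ is its own centre while $Z(\F_2)=1$); however, your second argument --- the injection $Z(RF(G))\into RF(G)^{ab}$ via commutative transitivity --- already gives finite generation, and freeness follows because residually free groups are torsion-free, which you should say explicitly since the final step (a surjection between free abelian groups of equal finite rank is an isomorphism) uses it.
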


\begin{proof} Note that $f_{RF}$ (resp.\ $f_{na}$) is an isomorphism if and only if any morphism $G\ra \bbF$ (resp.\ any such morphism with non-abelian image)
factors through $f$.
The lemma then follows from the fact  that the embedding $\Hom(H, \Z)\into \Hom(G, \Z)$ induced by $f$  is onto   if and only if $b_1(G)=b_1(H)$.
\end{proof}

Given a product $L_1\times\dots\times L_n$, we denote by $p_i$ the projection onto $L_i$. 

\begin{lem}\label{lem_nonab}
  Let $G\subset L=L_1\times\dots\times L_n$ with $L_i$ a limit group.
  Let $I\subset\{1,\dots,n\}$ be the set of indices such that $p_i(G)$ is abelian.
Then $RF_{na}(G)$ is the image of $G$ in $L'=\prod_{i\notin I} L_i$ (viewed as a quotient of $L_1\times \dots\times L_n$).
\end{lem}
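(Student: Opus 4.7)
The plan is to identify the kernel $M$ of the projection $\pi:G\to L'=\prod_{i\notin I}L_i$ with $\bigcap_{f\in\calh_{na}}\ker f$, so that $\pi$ induces an isomorphism $RF_{na}(G)\cong\pi(G)$. Writing $N_i=G\cap\ker p_i$, we have $M=\bigcap_{i\notin I}N_i$. The key structural observation I would record first is that $M$ is a normal abelian subgroup of $G$: normality is automatic as $M$ is a kernel, and $M$ embeds into $\prod_{i\in I}p_i(G)$, each factor of which is abelian by the definition of $I$.

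For the inclusion $M\subseteq\bigcap_{f\in\calh_{na}}\ker f$, I would take $f:G\to\F$ with non-abelian image. Then $f(M)$ is an abelian normal subgroup of the non-abelian subgroup $f(G)\subset\F$. In a free group, the normalizer of any non-trivial cyclic subgroup is itself cyclic, so a non-abelian subgroup of $\F$ admits no non-trivial abelian normal subgroup; hence $f(M)=1$.

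For the reverse inclusion, given $g\notin M$ I would choose $i\notin I$ with $p_i(g)\neq 1$. Since $p_i(G)$ is a non-abelian subgroup of the limit group $L_i$, commutative transitivity of $L_i$ forces $Z(p_i(G))=1$: any non-trivial central element of $p_i(G)$ would push $p_i(G)$ into its abelian centralizer in $L_i$, contradicting $i\notin I$. So some $h\in p_i(G)$ satisfies $[p_i(g),h]\neq 1$, and residual freeness of $L_i$ yields $\phi:L_i\to\F$ separating this commutator. The composition $f=\phi\circ p_i:G\to\F$ has non-abelian image and does not kill $g$, which completes the identification $M=\bigcap_{f\in\calh_{na}}\ker f$.

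The main obstacle here is conceptual rather than technical: the whole argument hinges on the elementary but crucial fact that $M$ is automatically abelian because its surviving projections land in the abelian $p_i(G)$. Once this is in hand, the rigidity of abelian normal subgroups inside non-abelian subgroups of $\F$ takes care of one direction, while the other direction is a routine use of residual freeness of $L_i$ combined with commutative transitivity of limit groups.
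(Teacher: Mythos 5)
Your proof is correct, but it takes a more self-contained route than the paper's. The paper's argument is a three-line reduction: since $G$ is a subgroup of a product of limit groups, $G=RF(G)$; the kernel of $G\to L'$ is identified with $Z(G)$ (because each $p_i(G)$ is either abelian or, being a non-abelian subgroup of a limit group, has trivial center); and then Lemma \ref{lem_centre}, which says $RF_{na}(G)=RF(G)/Z(RF(G))$, finishes the proof. You instead work directly from the definition of $RF_{na}$, showing that the kernel $M$ of $G\to L'$ equals $\bigcap_{f\in\calh_{na}}\ker f$: you observe that $M$ is abelian and normal (rather than central), kill it using the fact that a non-abelian subgroup of $\F$ has no non-trivial abelian normal subgroup, and separate elements outside $M$ using residual freeness and commutative transitivity of the individual $L_i$. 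In effect you re-prove the relevant half of Lemma \ref{lem_centre} in this special case, with "abelian normal" in place of "central" (which forces you to invoke the normalizer-of-a-cyclic-subgroup fact instead of commutative transitivity on the free-group side). What the paper's route buys is brevity and reuse of an already-established lemma; what yours buys is independence from Lemma \ref{lem_centre} and from the fact that $G=RF(G)$, at the cost of redoing some free-group arguments. Both are complete proofs.
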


\begin{proof} Note that $G=RF(G)$. An element $(x_1,\dots,x_n)\in G$ is in $Z(G)$ if and only if $x_i$ is central in $p_i(G)$ for every $i$. Since $p_i(G)$ is abelian or has trivial center, $Z(G)$ is the kernel of the natural projection $L\to L'$. The result follows from Lemma \ref{lem_centre}. 
\end{proof}

\begin{lem}\label{lem_fp}
  $RF(G)$ is finitely presented if and only if $RF_{na}(G)$ is.
\end{lem}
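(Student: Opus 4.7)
The plan is to exploit the central extension
$$1 \to Z(RF(G)) \to RF(G) \to RF_{na}(G) \to 1$$
provided by Lemma \ref{lem_centre}, reducing the equivalence to the single assertion that the kernel $Z(RF(G))$ is finitely generated.

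The main step will be to show that $Z(RF(G))$ is a finitely generated abelian group. Since $RF(G)$ is a finitely generated residually free group, it embeds into a finite product of limit groups $L_1 \times \dots \times L_n$. Then the argument of Lemma \ref{lem_nonab} applies to $RF(G)$ itself: letting $I$ be the set of indices for which $p_i(RF(G))$ is abelian, the projections $p_i(RF(G))$ for $i \notin I$ are non-abelian subgroups of the CSA group $L_i$ and therefore have trivial center. Hence $Z(RF(G))$ is contained in $\prod_{i \in I} p_i(RF(G))$, which is finitely generated abelian, so $Z(RF(G))$ is itself finitely generated.

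Granting this, both directions are routine. If $RF(G)$ is finitely presented, then so is the quotient $RF_{na}(G)=RF(G)/Z(RF(G))$ by the finitely generated normal subgroup $Z(RF(G))$. Conversely, if $RF_{na}(G)$ is finitely presented, then a finite presentation of $RF(G)$ can be assembled in the standard way for a central extension by a finitely generated abelian group: take lifts $\tilde x_i$ of a finite generating set $x_i$ of $RF_{na}(G)$, adjoin finitely many generators $a_j$ for $Z(RF(G))$, impose the relations making the $a_j$ central and presenting $Z(RF(G))$ as an abelian group, and for each defining relator $r(x)$ of $RF_{na}(G)$ add one relation of the form $r(\tilde x) = w(a)$, where $w$ is an explicit word recording the element $r(\tilde x) \in Z(RF(G))$.

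The only real content lies in the first step, and I do not expect a serious obstacle: the embedding of finitely generated residually free groups into products of limit groups is classical and already cited in the introduction, and the triviality of the center of a non-abelian CSA group is immediate from commutative transitivity.
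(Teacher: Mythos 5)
Your proposal is correct. The overall skeleton coincides with the paper's: both reduce the statement, via the central extension of Lemma \ref{lem_centre}, to the single fact that $Z(RF(G))$ is finitely generated, after which finite presentability passes both ways through a central extension with finitely generated abelian kernel in the standard manner you describe. Where you differ is in how you establish that key fact. The paper argues directly that for any finitely generated residually free group $H$ the abelianization map $H\to H_{ab}$ is injective on $Z(H)$ (a nontrivial central element survives in some free quotient, which must be cyclic by commutative transitivity, hence the element survives in $H_{ab}$), so $Z(H)$ embeds in a finitely generated abelian group. You instead invoke the embedding of $RF(G)$ into a finite product of limit groups and observe, as in Lemma \ref{lem_nonab}, that the center projects trivially to every factor with non-abelian image, hence injects into $\prod_{i\in I}p_i(RF(G))$ (strictly speaking, $Z(RF(G))$ lies in $RF(G)\cap\prod_{i\in I}L_i$ and the projection to $\prod_{i\in I}p_i(RF(G))$ is injective on it, but this is only a matter of phrasing). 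Both routes are valid; the paper's is more economical in its inputs, using only commutative transitivity of free groups, whereas yours leans on the embedding theorem for finitely generated residually free groups, which is a much deeper result but is already cited and used elsewhere in the paper, so nothing is lost.
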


\begin{proof}
If $H$ is any residually free group, the abelianization map $H\to H_{ab}$ is injective on $Z(H)$ since any element of $Z(H)$ survives in some free quotient of $H$, which has to be cyclic (see \cite[Lemma 6.2]{ BHMS_fprf}). In particular, $Z(H)$ is finitely generated if $H$ is. Applying this to $H=R(G)$, the exact sequence of Lemma \ref{lem_centre} gives the required result. 
\end{proof}

\section{Proof of the theorems}

Let $S$ be a finite set of elements in a group. We define $S_0=S\cup\{1\}$. If $R,R'$ are sets of words on $S\cup S\m$,
then $R^{S_0}$ is the set of all words obtained by conjugating elements of $R$ by   elements of $S_0$,
and $[R^{S_0},R']$ is the set of all words obtained as commutators of  words in $R^{S_0}$ and   words in $R'$.

\begin{prop}\label{prop_eqna} Let $A_1,\dots,A_n$ be arbitrary groups,  with $n\ge2$.
  Let $G\subset A_1\times\dots \times A_n$ be  generated by $S=\{s_1,\dots,s_k\}$.
Let 
 $p_i:G\ra A_i$  be the projection. 
Assume that $p_i(G)=RF_{na}(\grp{S\mid R_i})$ for some finite set of relators $R_i$.

Then the     set  
 $$ \Tilde R= [R_n^{S_0},[R_{n-1}^{S_0},\dots [R_3^{S_0}, [R_2^{S_0},R_1]]\dots]]$$
is a finite set of $na$-equations of $RF_{na}(G)$ over $S$, \ie $RF_{na}(G)=RF_{na}(\grp{S\mid \Tilde R })$.

\end{prop}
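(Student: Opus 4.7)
Let $\Gamma:=\grp{S\mid \tilde R}$. The plan is to show that the natural map $\Gamma\to G$ (sending each $s_j\in S$ to itself) is well-defined and induces an isomorphism $RF_{na}(\Gamma)\cong RF_{na}(G)$. For well-definedness, I verify that every element of $\tilde R$ vanishes in $G$: since $p_i(G)$ is a quotient of $\grp{S\mid R_i}$, each word in $R_i$ evaluates in $G$ into $\ker p_i$, and the same holds for $R_i^{S_0}$ by normality. A short induction on $k$ then shows that every element of $[R_k^{S_0},[R_{k-1}^{S_0},\dots,[R_2^{S_0},R_1]\dots]]$ has trivial image under $p_1,\dots,p_k$ (the key point being that $[a,b]$ has trivial $i$-th projection whenever $p_i(a)=1$ or $p_i(b)=1$); taking $k=n$ and using that $G$ embeds into $A_1\times\cdots\times A_n$ gives $\tilde R\inc\{1\}$ in $G$.

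The induced map $RF_{na}(\Gamma)\to RF_{na}(G)$ is surjective; for injectivity I show that every $\phi:\Gamma\to\F$ with non-abelian image factors through $\Gamma\to G$. The easy direction: if $\phi(R_i)\inc\{1\}$ for some $i$, then $\phi$ factors through $\grp{S\mid R_i}$ (as $\tilde R$ lies in the normal closure of $R_i$ in $F(S)$), then through $RF_{na}(\grp{S\mid R_i})=p_i(G)$ by non-abelianity of the image, and finally to $\F$; since the resulting map $\Gamma\to p_i(G)$ agrees with the composition $\Gamma\to G\to p_i(G)$ on the generating set $S$, $\phi$ factors through $\Gamma\to G$.

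The main obstacle is the remaining claim: for every $\phi:\Gamma\to\F$ with non-abelian image, $\phi(R_i)\inc\{1\}$ for some $i$. I would prove the following more flexible statement by induction on $n\ge 2$: given a non-abelian subgroup $H\inc\F$ generated by a set $T$ (with $T_0=T\cup\{1\}$), and subsets $Y_1,\dots,Y_n\inc H$ satisfying $[y_n^{t_n},[\dots,[y_2^{t_2},y_1]\dots]]=1$ for all $y_i\in Y_i$ and $t_i\in T_0$, one has $Y_i\inc\{1\}$ for some $i$. The base case $n=2$ combines two facts about free groups: centralizers of non-trivial elements are cyclic, and no non-trivial element is conjugate to its inverse. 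If $y_1\in Y_1$ and $y_2\in Y_2$ are both non-trivial and $c$ is the root of $y_1$, the hypothesis forces $y_2^{t_2}\in C_\F(y_1)=\grp c$ for all $t_2\in T_0$; writing $y_2=c^m$ with $m\ne 0$, each $t\in T$ satisfies $t^{-1}c^mt=c^{\pm m}$, but the minus sign is ruled out by the second fact, whence $t\in C_\F(c^m)=\grp c$. Thus $H=\grp T\inc\grp c$ is abelian, contradicting the hypothesis.

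For the inductive step, suppose the claim fails at rank $n$ with all $Y_i\not\inc\{1\}$, and pick $y_i\in Y_i$ non-trivial. Fix $t_2,\dots,t_{n-1}\in T_0$ and set $c_{n-1}=[y_{n-1}^{t_{n-1}},[\dots,[y_2^{t_2},y_1]\dots]]$. Since $[y_n^{t_n},c_{n-1}]=1$ for all $t_n\in T_0$, with $y_n\ne 1$ and $H$ non-abelian, the $n=2$ case applied to $\{c_{n-1}\}$ and $\{y_n\}$ forces $c_{n-1}=1$. As this holds for every inner choice, the singletons $\{y_1\},\dots,\{y_{n-1}\}$ satisfy the hypothesis at rank $n-1$; the inductive hypothesis then produces some $y_i=1$, contradicting our choice.
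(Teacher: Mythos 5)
Your proposal is correct, and its core coincides with the paper's: the decisive step in both arguments is that a morphism to $\F$ with non-abelian image killing $\Tilde R$ must kill some $R_i$, proved by inductively peeling off the outermost commutator and using that in a free group an element commuting with a whole $T_0$-conjugacy class of a non-trivial element forces abelianity. Where the paper invokes the CSA property of $\F$ wholesale, you reprove exactly the needed malnormality statement from two elementary facts (cyclic centralizers, and that no non-trivial element of $\F$ is conjugate to its inverse); this is the same content, just self-contained. The genuine difference is in the other direction. The paper shows that a non-abelian-image morphism factoring through $G$ kills some $R_i$ (hence $\Tilde R$) by applying CSA to the commuting normal subgroups $G\cap\ker p_n$ and $G\cap\ker p_{1,\dots,n-1}$ and inducting. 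You instead check directly that every element of $\Tilde R$ has trivial image under each $p_j$ (each iterated commutator pairs an element of $\ker p_k$ against something in $\bigcap_{j<k}\ker p_j$), so $\Tilde R$ is already trivial in $G\subset A_1\times\dots\times A_n$ and $\grp{S\mid\Tilde R}$ surjects onto $G$; the required factorization is then automatic and needs no hypothesis on the image. Your route is slightly more economical here and makes explicit the surjection $\Tilde G\onto G$, which the paper only establishes later (inside the proof of Theorem~\ref{thm1}) when it needs it; the paper's version yields the marginally stronger byproduct that every non-abelian free quotient of $G$ factors through some $p_i$, but that extra information is not used for the Proposition itself.
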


An equality such as $p_i(G)=RF_{na}(\grp{S\mid R_i})$ means that there is an isomorphism commuting with the natural projections $F(S)\to p_i(G)$ and $F(S)\to RF_{na}(\grp{S\mid R_i})$, where $F(S)$ denotes the free group on $S$.

\begin{proof}
Recall that a free group $\bbF$ is CSA: 
commutation is transitive on $\bbF\setminus\{1\}$,
and maximal abelian subgroups are malnormal. 
In particular, if two non-trivial subgroups commute, then both are abelian.
If $A, B$   are non-trivial   subgroups of $\bbF$, and if $A$ commutes with $B,B^{x_1},\dots,B^{x_p}$ for elements $x_1,\dots ,x_p\in \bbF$, 
then $\grp{A,B,x_1,\dots ,x_p}$ is   abelian.

We write 
$$\Tilde G=\grp{S\mid \Tilde R}=\grp{S\mid  [ R_n^{S_0},[R_{n-1}^{S_0},\dots[R_2^{S_0},R_1]\dots]]}.$$
 We always denote by $\varphi:F(S)\to\F$ a morphism with non-abelian image. We shall show that 
\emph{such a $\varphi$   factors through $G$ if and only if it factors through $\tilde G$.} 
This implies the desired result $RF_{na}(G)=RF_{na}( \Tilde G )$: both groups are equal to the image of $F(S)$ in $\bbF^{\calh_{na}}$, 
where $\calh_{na}$ is the set of all $\varphi$'s  which factor through $G$ and $\tilde G$.

We proceed by induction on $n$. 
We first claim that $\varphi$ is trivial on $\Tilde R$ if and only if it is trivial on some $R_i$. The if direction is clear. 
For the only if direction, observe that the image of $[R_{n-1}^{S_0},\dots [R_2^{S_0},R_1]\dots]$ 
commutes with all conjugates of $\varphi(R_n)$ by elements of $\varphi(F(S))$, 
so $R_n$ or $[R_{n-1}^{S_0},\dots[R_2^{S_0}, R_1]\dots]$ has trivial image. The claim follows by induction.

Now suppose that $\varphi$ factors through $\Tilde G$. Then $\varphi$ kills $\Tilde R$, hence some $R_i$. It follows that $\varphi$ factors through $p_i(G)$, hence through $G$. 

Conversely, suppose that $\varphi$ factors through $f:  G\to \F$. 
Consider the intersection of $G$ with the kernel of $p_n:G\to A_n$ and the kernel of $p_{1,\dots,n-1}:G\to A_1\times \dots\times A_{n-1}$. These are commuting normal subgroups of $G$. If both have non-trivial image in $\F$, the CSA property implies that the image of $f$ is abelian, a contradiction. We deduce that $f$ factors through $p_n $ or through $p_{1,\dots,n-1}$, and by induction that it factors through some $p_i$. Thus  $\varphi$ kills $R_i$, hence $\Tilde R$ as required. 
\end{proof}

\begin {proof}
[Proof of Theorem \ref{thm1}]

Let $L_1, \dots, L_n$ and $G=\langle S\rangle$ be as in Theorem \ref{thm1}.   
Using a solution of the word problem in a limit group, 
one can find the indices $i$ for which $p_i(G)\inc L_i$ is abelian (this amounts to checking whether the elements of $p_i(S)$ commute).

First assume that no   $p_i(G)$ is  abelian. 
As pointed out in \cite[Lemma 7.5]{BHMS_fprf}, one deduces from \cite{Wilton_hall}  an algorithm yielding  a finite presentation 
$  \langle S \mid  R_i\rangle$ of $p_i(G)$. Since $p_i(G)$ is not abelian, one has  $p_i(G)=RF_{na}(\langle S \mid  R_i\rangle)$,
 and Proposition \ref{prop_eqna} yields a finite set of na-equations for $RF_{na}(G)$ over $S$   (if $n=1$, then $RF_{na}(G)=p_1(G)$).  
If some $p_i(G)$'s are  abelian, we simply replace $G$ by its image in   $L'$ as in Lemma \ref{lem_nonab}. This proves the first assertion of the theorem.

Now suppose that $b_1(G)$ is known. 
We want a finite set $R$ such that $RF(G)=RF(\grp{S\mid  R})$.
 If $n=1$,   then $G$ is a  subgroup of the limit group $L_1$, and one can find a finite presentation of $G$ as explained above.
So assume $n\geq 2$.
  Consider the finite presentation $\Tilde G=\grp{S\mid \Tilde R}$ given by  Proposition \ref{prop_eqna},
so that $RF_{na}(\Tilde G)=RF_{na}(G)$. 

We claim that $G$ is a quotient of $\Tilde  G$. To see this,   we consider an $x\in F(S)$ which is trivial in $\Tilde G$ and we prove that it is trivial in $G$. If not,  residual freeness of $G$ implies that $x$ survives under a morphism $\varphi:F(S)\to \F$ which factors through $G$. 
If $\varphi$ has non-abelian image, it factors through $RF_{na}(G)=RF_{na}(\Tilde G)$, hence through $\Tilde G$, 
contradicting the triviality of $x$ in $\Tilde G$. If the image is abelian, $\varphi$ also factors through $\Tilde G$ because 
  all relators in $\Tilde R$ are commutators.

Since $\Tilde R$ is finite, we can compute $b_1(\Tilde G)$.
If $b_1(\Tilde G)=b_1(G)$, we are done by Lemma \ref{lem_b1}  since $G$ is a quotient of $\Tilde  G$.
If $b_1(\Tilde G)>b_1(G)$, we enumerate all trivial words of $G$ (using an enumeration of trivial words in each $p_i(G)$),
and we add them to the presentation of $\Tilde G$ one by one. 
We compute    $b_1$ after each addition, and we stop when we reach the  known value $b_1(G)$.

Conversely, if  we have a finite set of defining equations for $G$, so that $G=RF(\langle S \mid  R \rangle)$, we can compute  $b_1(\langle S \mid  R \rangle)$, which equals $b_1(G)$ by Lemma \ref{lem_b1}.
\end{proof}

\begin{thm}[Theorem 2]
There is no algorithm with takes as input a finite group presentation $\grp{S\mid \Tilde R}$, 
and which decides whether $ RF(\grp{S\mid \Tilde R})$ is finitely presented.
\end{thm}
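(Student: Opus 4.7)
The plan is to reduce to Grunewald's theorem \cite{Grunewald_fp}, which asserts that there is no algorithm that decides, on input a finite subset $S\inc \F_2\times\F_2$, whether the subgroup $\grp{S}$ is finitely presented. So I would suppose for contradiction that finite presentability of $RF(\grp{S\mid R})$ is decidable from an arbitrary finite presentation $\grp{S\mid R}$, and convert this hypothetical decision procedure into an algorithm solving Grunewald's problem.

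Given $S\inc\F_2\times\F_2$, write $G=\grp{S}$. First, apply the first (unconditional) assertion of Theorem \ref{thm1} with $L_1=L_2=\F_2$: this effectively produces a finite set $\Tilde R$ of $na$-equations for $G$, i.e.\ a finite presentation $\grp{S\mid \Tilde R}$ such that $RF_{na}(\grp{S\mid\Tilde R})=RF_{na}(G)$. Observe that $G$, being a subgroup of a product of limit groups, is automatically residually free, so $G=RF(G)$.

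Next I would apply Lemma \ref{lem_fp} twice. On one hand, $RF(\grp{S\mid\Tilde R})$ is finitely presented if and only if $RF_{na}(\grp{S\mid\Tilde R})$ is. On the other hand, $G=RF(G)$ is finitely presented if and only if $RF_{na}(G)$ is. Since the two relevant $RF_{na}$ quotients coincide by construction, chaining gives: $RF(\grp{S\mid\Tilde R})$ is finitely presented if and only if $G=\grp{S}$ is. Running the hypothetical decision procedure on the computable input $\grp{S\mid\Tilde R}$ would therefore decide finite presentability of $\grp{S}$, contradicting Grunewald.

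The only real thing to verify is that Grunewald's theorem is genuinely available in the form stated above (undecidability of finite presentability for a finitely generated subgroup of $\F_2\times\F_2$ presented by a finite generating set); everything else is essentially mechanical given Theorem \ref{thm1} and Lemma \ref{lem_fp}. In particular, the second, conditional assertion of Theorem \ref{thm1} --- which requires knowing $b_1(G)$ --- is \emph{not} used, so the reduction is not obstructed by any undecidable side-condition, and the computation of $\Tilde R$ is genuinely uniform in $S$.
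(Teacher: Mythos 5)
Your proposal is correct and follows essentially the same route as the paper: apply the first assertion of Theorem \ref{thm1} to produce $\Tilde R$ with $RF_{na}(\grp{S\mid\Tilde R})=RF_{na}(\grp{S})$, use Lemma \ref{lem_fp} (twice, together with $RF(\grp{S})=\grp{S}$) to transfer finite presentability, and conclude by Grunewald's undecidability result for subgroups of $\F_2\times\F_2$. The paper's proof is exactly this reduction, so there is nothing to add.
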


\begin{proof} 
Given a finite set $S\subset \bbF_2\times \bbF_2$, Theorem \ref{thm1} provides a finite set $\Tilde R$ such that $RF_{na}(\langle S\rangle)=RF_{na}(\grp{S\mid \Tilde R })$. Using Lemma \ref{lem_fp}, we see that finite presentability of $RF (\grp{S\mid \Tilde R })$ is equivalent to that of $RF_{na}(\grp{S\mid \Tilde R })$, hence to that of $RF (\langle S\rangle)=\langle S\rangle$. But  it follows from \cite{Grunewald_fp} that  there is no algorithm which decides, given a finite set  $S\subset \bbF_2\times \bbF_2$, whether $\langle S\rangle$ is finitely presented. 
\end{proof}

\bibliographystyle{alpha2}
\bibliography{published,unpublished}

\begin{flushleft}
Vincent Guirardel\\
Institut de Math\'ematiques de Toulouse\\
Universit\'e de Toulouse et CNRS (UMR 5219)\\
118 route de Narbonne\\
F-31062 Toulouse cedex 9\\
France.\\
\emph{e-mail:}\texttt{guirardel@math.ups-tlse.fr}\\[8mm]

Gilbert Levitt\\
Laboratoire de Math\'ematiques Nicolas Oresme\\
Universit\'e de Caen et CNRS (UMR 6139)\\
BP 5186\\
F-14032 Caen Cedex\\
France\\
\emph{e-mail:}\texttt{levitt@math.unicaen.fr}\\
\end{flushleft}

\end{document}